\documentclass[11pt]{amsart}
\usepackage{amsmath}
\usepackage{hyperref}
\pagestyle{plain}

\newtheorem{proposition}{Proposition}
\newtheorem{lemma}[proposition]{Lemma}

\theoremstyle{definition}

\newtheorem{example}[proposition]{Example}

\theoremstyle{remark}

\theoremstyle{acknowledgment}

\theoremstyle{formula}


\newcommand\inner[2]{\langle #1, #2 \rangle}

\makeatletter
\newcommand{\addresseshere}{%
	\enddoc@text\let\enddoc@text\relax
}
\makeatother

\begin{document}
	
\title{A Derivation of Classical Orthogonal Polynomials using Generalized Vandermonde Determinants}
\author{Lijing Wang}
\address{Global Model Risk Management, Bank of America }
\email{wlj@alum.mit.edu}

\subjclass{Primary 15B05, 42C05; Secondary 15A63, 15A75}
\keywords{Hermite polynomials, Jacobi polynomials, Laguerre polynomials, Gram-Schmidt orthogonalization, Vandermonde determinants, Hodge star operator}

\begin{abstract}
We present a derivation of classical Hermite, Laguerre, and Jacobi orthogonal polynomials directly through the Gram-Schmidt orthogonization process. The derivation uses certain generalized Vandermonde determinants with entries defined by Gamma and Beta functions. We also provide a geometric formulation of Gram-Schmidt orthogonalization using the Hodge star operator.
\end{abstract}

\maketitle

\section{Introduction}

Orthogonal polynomials are classes of polynomials satisfying orthogonal relationships with respect to certain weighting functions (\cite{kn:AW},\cite{kn:BealsWong},\cite{kn:CH}). Three well-known classical polynomials are Hermite, Laguerre, and Jacobi polynomials. There are various well-known ways to derive the analytic form of these polynomials such as using Sturm-Liouville theory, three-terms recursive formula or Rodrigues formulas. Gram-Schmidt orthogonalization is also often used to derive first few terms but not general terms. 

In this note, we present a derivation of the general terms of the three classical polynomials using Gram-Schmidt orthogonalization process. A key ingredient of the approach is the calculation of certain generalized Vandermonde determinants with entries defined by Gamma and Beta functions. Such generalized determinants have been studied by Normand in \cite{kn:Normand}.

Given an interval $I=(a, b)$ and a weight function $w(x)$, one can define a set of orthogonal polynomials $\{P_n(x)\}_{n=0, 1, 2,\cdots}$ so that they form an orthogonal basis with respect to the inner product defined as
\[ \inner{f}{g}:=\int_{a}^{b} f(x)g(x)w(x)dx \]

The three classical polynomials correspond to the following cases:
\begin{itemize}
	\item Hermite polynomials $H_{n}(x)$: $I=(-\infty, +\infty)$ and $w(x)=e^{-x^{2}}$
	\item Laguerre polynomials $L_{n}^{\alpha}(x)$: $I=(0, +\infty)$ and $w(x)=x^{\alpha}e^{-x}$
	\item Jacobi polynomials $J_{n}^{(\alpha, \beta)}$: $I=(-1, +1)$ and $w(x)=(1-x)^{\alpha}(1+x)^{\beta}$
\end{itemize}

Jacobi polynomials also have Gegenbauer polynomials, Chebyshev polynomials and Legendre polynomials as special cases.

Explicit expressions for the three classical polynomials (\cite{kn:BealsWong}) are given as
\begin{align}
	H_{n}(x) &=\sum_{m=0}^{\lfloor \frac{n}{2} \rfloor}\frac{(-1)^{m}}{(m)!(n-2m)!}(2x)^{n-2m} \label{eq:Hermite}\\
	L_{n}^{\alpha}(x) &=\sum_{m=0}^{n}(-1)^m\frac{(\alpha+1)_{n}}{m!(n-m)!(\alpha+1)_{m}}x^{m} \label{eq:Laguerre} \\
	J_{n}^{(\alpha, \beta)}(x) &= (\alpha+1)_{n} \sum_{m=0}^{n} \frac{(-1)^m(\alpha+\beta+n+1)_{m}}{m!(n-m)!(\alpha+1)_{m}}\Big( \frac{1-x}{2} \Big)^{m} \label{eq:Jacobi}
\end{align}
where $\lfloor x \rfloor$ is the floor function and $(a)_{n}=a(a+1)(a+2)\cdots(a+n-1)$ is the Pochhammer symbol. $(a)_{n}$ is related to the Gamma function as
\begin{equation*}
	(a)_n=\frac{\Gamma(a+1)}{\Gamma(a)}\frac{\Gamma(a+2)}{\Gamma(a+1)}\cdots\frac{\Gamma(a+n)}{\Gamma(a+n-1)}=\frac{\Gamma(a+n)}{\Gamma(a)}
\end{equation*}
It follows that 
\begin{equation*}
	(a)_n(a+n)_m=(a)_{n+m}
\end{equation*}

\section{Gram-Schmidt Orthogonalization} \label{sec:gramschmidt}

Gram-Schmidt process is a generic procedure for orthogonalizing a set of vectors in an inner product space. The process is described recursively as:
\begin{equation}\label{eq:gsrecursive}
	u_k=v_k-\sum_{i=0}^{k-1}\frac{\inner{v_k}{u_i}}{\inner{u_i}{u_i}}u_i
\end{equation}
where $\{v_0, v_1, \cdots, v_n\}$ is a set of linearly independent vectors in an inner product space $(V, \langle \rangle)$, and $\{u_0, u_1, \cdots, u_n\}$ is the orthogonal set produced.

The recursive formula \eqref{eq:gsrecursive} can be readily used to calculate first few orthogonal polynomials. However the calculation quickly becomes cumbersome with increasing degree of polynomial (p.645 \cite{kn:AW}).

An alternative formulation for Gram-Schmidt process is a non-recursive formula (\cite{kn:Bateman},\cite{kn:Gantmacher}) that expresses $u_k$ directly in terms of $\{v_i\}_{i=0,1\cdots,k}$. The formula is given using determinant of a Gram matrix as the following 

\begin{equation}\label{eq:gsdeterminant}
	u_k = \frac{1}{d_{k,k}} \left | 
	\begin{array}{cccc}
		\inner{v_0}{v_0} & \inner{v_1}{v_0} & \cdots & \inner{v_k}{v_0} \\
		\inner{v_0}{v_1} & \inner{v_1}{v_1} & \cdots & \inner{v_k}{v_1} \\
		\vdots & \vdots & \ddots & \vdots \\
		\inner{v_0}{v_{k-1}} & \inner{v_1}{v_{k-1}} & \cdots & \inner{v_k}{v_{k-1}} \\
		v_0 & v_1 & \cdots & v_k\\				
	\end{array}
	\right |	
\end{equation}
with $d_{0,0}=1$ and $d_{k, k}$ is the cofactor of $v_k$ so that the coefficient of $v_k$ is 1. 

The determinant formula \eqref{eq:gsdeterminant} can be proved by taking the inner product of $u_k$ with each $v_i, i=0,1\cdots,k-1$. Each inner product is 0 since it is the determinant of a matrix with two identical rows. Denote the cofactor of $v_i$ as $d_{k,i}$, the determinant formula \eqref{eq:gsdeterminant}  can be rewritten as
\begin{equation}\label{eq:gscofactor}
	u_k = \sum_{i=0}^{k}(-1)^{i+k}\frac{d_{k,i}}{d_{k,k}} v_{i} 
\end{equation}  
Each cofactor $d_{k,i}$ is the determinant of the Gram matrix defined by pairwise inner products between $\{v_0, v_1, \cdots, \widehat{v_i} \cdots,v_k\}$ and $\{v_0, v_1, \cdots, v_{k-1}\}$. A geometric interpretation of the determinant formula \eqref{eq:gsdeterminant} in terms of exterior algebra can be found in Appendix \ref{apendix:geometric-intepretation}.

Note that $u_k$ is invariant under permutation among $\{v_0, v_1, \cdots, v_{k-1}\}$. This can be seen either using formula \eqref{eq:gsrecursive} where the summation is invarint under permutation or noting that same amount of row and column operations occur in the Gram matrix under permutation. This observation will be used to simplify the calculation for Hermite polynomials in Section \ref{sec:hermite}.

Another thing to note is that orthogonal basis is unique up to a scaling factor. In Gram-Schmidt process, the coefficient for $v_k$ is one. In applications, a different scaling factor may be used for other consideration. In later sections we'll see that the orthogonal functions obtained using Gram-Schmidt process will differ from standard expressions by certain scaling factors.  

\begin{example}
	As an example, we compute the inner product between two monomials $x^i$ and $x^j$ under Laguerre weight function:
	\begin{align} \label{eq:gshermite}
		\inner{x^i}{x^j} & = \int_{0}^{+\infty} x^i x^j x^{\alpha} e^{-x} dx = \int_{0}^{+\infty} x^{i+j+\alpha} e^{-x} dx \nonumber	\\
		& = \Gamma(i+j+\alpha+1)
	\end{align}  
	Therefore the Gram matrix for monomial basis are entries defined by Gamma functions. The determinant of such matrix turns out to be related to Vandemonde determinant which is described in the next section.
\end{example} 

\section{Vandermonde Determinants} \label{sec:vandermonde}

For a number set $\mathbf{z}=(z_0,z_1,\cdots, z_{n-1})$, one can define a Vandermonde matrix with entries $z_{j-1}^{i-1}$. Its determinant has the following formula:
\begin{equation} \label{eq:vandermonde}
	\Delta_n(\mathbf{z})=\Delta_n (z_0,z_1,\cdots, z_{n-1}):=\prod_{0\le i<j \le n-1}(z_j-z_i)
\end{equation}

The Vandermonde formula \eqref{eq:vandermonde} can be proved recursively by subtracting each row by the previous row multiplied by $z_0$ and performing factorization which gives
\begin{equation*}
	\text{det} [(z_j)_i]_{i,j=0,1,\cdots,n-1}=\Big\{ \prod_{1\le i \le n-1}(z_i-z_0) \Big\} \text{det} [(z_{j+1})_i]_{i,j=0,1,\cdots,n-2}	
\end{equation*} 

A useful observation is that $\Delta_n(\mathbf{z})$ is invariant under a parallel shift, i.e.
\begin{equation*}
	\Delta_n(\mathbf{z} \oplus c):=\Delta_n (z_0+c,z_1+c,\cdots, z_{n-1}+c)=\Delta_n(\mathbf{z})
\end{equation*}

For $m=0,1,\cdots,n$, define set $\mathbf{e}_n^m:= (1, 2, \cdots, \widehat{m+1},\cdots, n+1)$. In particular $\mathbf{e}_n^n= (1, 2, \cdots, n)$. Straightforward calculation shows
\begin{proposition}
	\begin{equation}\label{ed:intdelta}
		\frac{\Delta_n(\mathbf{e}_n^m)}{\Delta_n(\mathbf{e}_n^n)}=\frac{\Delta_n(1, 2, \cdots, \widehat{m+1},\cdots, n+1)}{\Delta_n(1, 2, \cdots, n)}=\frac{n!}{m!(n-m)!}	
	\end{equation} 
\end{proposition}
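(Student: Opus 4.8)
The plan is to realize both determinants appearing in the ratio as restrictions of a single Vandermonde determinant over the full consecutive block $(1, 2, \ldots, n+1)$. Observe that $\mathbf{e}_n^m$ is precisely the set $\{1, 2, \ldots, n+1\}$ with the single element $m+1$ deleted, and likewise $\mathbf{e}_n^n$ is that block with $n+1$ deleted. So if I can express the parent determinant $\Delta_{n+1}(1, 2, \ldots, n+1)$ as the product of $\Delta_n(\mathbf{e}_n^m)$ with the factors lost by removing $m+1$, then this common parent will cancel when I form the ratio, leaving only an elementary product of differences to evaluate.

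Concretely, I would start from the product formula \eqref{eq:vandermonde}. Since the entries are the consecutive integers $z_k = k+1$, every difference $z_j - z_i = j - i$ is a positive integer, so there are no sign subtleties. Grouping the factors of $\Delta_{n+1}(1, 2, \ldots, n+1)$ according to whether they involve the deleted value $m+1$ gives
\begin{equation*}
	\Delta_{n+1}(1, 2, \ldots, n+1) = \Delta_n(\mathbf{e}_n^m) \cdot \prod_{a=1}^{m}\bigl((m+1)-a\bigr) \cdot \prod_{b=m+2}^{n+1}\bigl(b-(m+1)\bigr).
\end{equation*}
The first trailing product runs over the $m$ entries smaller than $m+1$ and equals $m \cdot (m-1) \cdots 1 = m!$; the second runs over the $n-m$ entries larger than $m+1$ and equals $1 \cdot 2 \cdots (n-m) = (n-m)!$. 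Hence $\Delta_{n+1}(1, 2, \ldots, n+1) = m!\,(n-m)!\,\Delta_n(\mathbf{e}_n^m)$.

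Applying this identity once with general $m$ and once with $m = n$ (where the second product is empty and contributes $0! = 1$) and dividing, the parent determinant cancels and yields
\begin{equation*}
	\frac{\Delta_n(\mathbf{e}_n^m)}{\Delta_n(\mathbf{e}_n^n)} = \frac{\Delta_{n+1}(1, \ldots, n+1)/(m!\,(n-m)!)}{\Delta_{n+1}(1, \ldots, n+1)/(n!\,0!)} = \frac{n!}{m!\,(n-m)!},
\end{equation*}
which is the claimed identity. The only point requiring genuine care — the hard part, such as it is — is the bookkeeping in the factorization step: verifying that deleting the single value $m+1$ from the block removes exactly the factors $(m+1)-a$ for $a<m+1$ and $b-(m+1)$ for $b>m+1$, and nothing else, so that the surviving factors assemble precisely into $\Delta_n(\mathbf{e}_n^m)$. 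Once that grouping is justified, the two factorials and the cancellation are immediate. An alternative, more computational route would pair up the factors of $\Delta_n(\mathbf{e}_n^m)$ and $\Delta_n(\mathbf{e}_n^n)$ directly, but routing through the common parent $\Delta_{n+1}$ avoids constructing an explicit bijection between factors and is cleaner.
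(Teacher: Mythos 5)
Your proof is correct. Note that the paper itself offers no argument for this proposition beyond the phrase ``straightforward calculation shows,'' so there is no detailed proof to compare against; your write-up supplies exactly the details the paper omits. Your route --- embedding both $\Delta_n(\mathbf{e}_n^m)$ and $\Delta_n(\mathbf{e}_n^n)$ into the common parent $\Delta_{n+1}(1,2,\ldots,n+1)$ via the factorization $\Delta_{n+1}(1,\ldots,n+1)=m!\,(n-m)!\,\Delta_n(\mathbf{e}_n^m)$ --- is a clean way to organize the calculation: the bookkeeping is confined to one factorization (the factors involving the deleted entry $m+1$ split into the $m$ differences below it and the $n-m$ differences above it), and the parent determinant cancels in the ratio, so you never need an explicit factor-by-factor bijection between the numerator and denominator. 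The more pedestrian alternative the paper presumably has in mind --- writing both determinants out with the product formula \eqref{eq:vandermonde} and cancelling directly, e.g.\ via $\Delta_n(1,\ldots,n)=\prod_{k=1}^{n-1}k!$ and $\Delta_n(\mathbf{e}_n^m)=\bigl(\prod_{k=1}^{n}k!\bigr)/\bigl(m!\,(n-m)!\bigr)$ --- arrives at the same place but with messier indexing; your argument is the tidier version of the same elementary idea.
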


The following are the generalized Vandermonde determinant formula for matrices with entries defined by Gamma and Beta functions.
\begin{proposition}
	\begin{align}
		\textup{det} [\Gamma(z_j+i)]_{n \times n}& =\Big\{ \prod_{j=0}^{n-1} \Gamma(z_j) \Big\} \Delta_n(\mathbf{z}) \label{eq:detgamma}	\\
		\textup{det} [B(z_j+i,w)]_{n \times n}&=\Big\{\prod_{j=0}^{n-1} \frac{\Gamma(z_j) \Gamma(w+j)}{\Gamma(z_j+w+n-1)}\Big\} \Delta_n(\mathbf{z}) \label{eq:detbeta}	
	\end{align}	
\end{proposition}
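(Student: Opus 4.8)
The two identities can be handled in parallel, with the Gamma case serving as a warm-up for the Beta case. For \eqref{eq:detgamma} I would use the factorization $\Gamma(z_j+i)=\Gamma(z_j)\,(z_j)_i$ (for $i=0,1,\dots,n-1$), which follows from the functional equation $\Gamma(x+1)=x\Gamma(x)$. Pulling the common factor $\Gamma(z_j)$ out of the $j$-th column gives $\det[\Gamma(z_j+i)] = \{\prod_{j=0}^{n-1}\Gamma(z_j)\}\det[(z_j)_i]$. Since $(z_j)_i$ is a monic polynomial of degree $i$ in $z_j$, the change of basis from $\{1,z,\dots,z^{n-1}\}$ to the rising factorials $\{(z)_0,\dots,(z)_{n-1}\}$ is lower-unitriangular, so $\det[(z_j)_i]=\det[z_j^{\,i}]=\Delta_n(\mathbf z)$, the Vandermonde determinant \eqref{eq:vandermonde}. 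This yields \eqref{eq:detgamma}.

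For \eqref{eq:detbeta} I plan to reduce to a polynomial determinant of the same flavor. Writing $B(x,w)=\Gamma(x)\Gamma(w)/\Gamma(x+w)$ and applying the functional equation twice gives $B(z_j+i,w)=B(z_j,w)\,(z_j)_i/(z_j+w)_i$. The denominators $(z_j+w)_i$ depend on the row index $i$, but each divides $(z_j+w)_{n-1}$; hence I would clear denominators by multiplying the $j$-th column by $(z_j+w)_{n-1}$, which multiplies the determinant by $\prod_j (z_j+w)_{n-1}$. Using the Pochhammer identity $(z+w)_{n-1}/(z+w)_i=(z+w+i)_{n-1-i}$, the transformed entries become the polynomials $g_i(z_j)$ with $g_i(z):=(z)_i\,(z+w+i)_{n-1-i}$, and collecting the scalar prefactors gives
\[ \det[B(z_j+i,w)] = \Big\{\prod_{j=0}^{n-1}\frac{\Gamma(z_j)\Gamma(w)}{\Gamma(z_j+w+n-1)}\Big\}\,\det[g_i(z_j)], \]
so it remains to prove $\det[g_i(z_j)]=\{\prod_{j=0}^{n-1}(w)_j\}\,\Delta_n(\mathbf z)$.

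The crux is that every $g_i$ has the same degree $n-1$, so the monic-degree argument used for the Gamma case does not apply directly; instead I would expand each $g_i$ in the rising-factorial basis. Applying the Chu--Vandermonde identity $(z+i+w)_{n-1-i}=\sum_{l=0}^{n-1-i}\binom{n-1-i}{l}(z+i)_l\,(w)_{n-1-i-l}$ and absorbing the prefactor via $(z)_i(z+i)_l=(z)_{i+l}$ gives, after setting $k=i+l$,
\[ g_i(z)=\sum_{k=i}^{n-1}\binom{n-1-i}{k-i}(w)_{n-1-k}\,(z)_k . \]
The coefficient matrix $[c_{ik}]$ with $c_{ik}=\binom{n-1-i}{k-i}(w)_{n-1-k}$ (and $c_{ik}=0$ for $k<i$) is thus upper triangular with diagonal $c_{ii}=(w)_{n-1-i}$, so $\det[g_i(z_j)]=\det[c_{ik}]\cdot\det[(z_j)_k]=\{\prod_{i=0}^{n-1}(w)_{n-1-i}\}\Delta_n(\mathbf z)=\{\prod_{j=0}^{n-1}(w)_j\}\Delta_n(\mathbf z)$. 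Substituting back and using $\Gamma(w)(w)_j=\Gamma(w+j)$ to merge the column factors produces exactly the product in \eqref{eq:detbeta}. I expect the main obstacle to be the denominator-clearing step combined with the Chu--Vandermonde expansion that triangularizes the coefficient matrix; once that structure is identified, the determinant evaluation is immediate.
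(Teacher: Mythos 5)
Your proof is correct, and while your treatment of \eqref{eq:detgamma} coincides with the paper's argument (factor out $\Gamma(z_j)$ column-wise via $\Gamma(z_j+i)=\Gamma(z_j)(z_j)_i$ and use that $(z_j)_i$ is monic of degree $i$ in $z_j$ to reduce to the Vandermonde determinant), your proof of \eqref{eq:detbeta} takes a genuinely different route. Both arguments start from the same factorization $B(z_j+i,w)=B(z_j,w)\,(z_j)_i/(z_j+w)_i$, but the paper then evaluates $\det[(z_j)_i/(z_j+w)_i]$ (Normand's Lemma 3, equation \eqref{eq:detbeta0}) by a Vandermonde-style recursion: row eliminations produce an $(n-1)\times(n-1)$ determinant of the same shape with $w$ replaced by $w+1$, and unwinding the recursion requires assembling four nested products. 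You instead clear the row-dependent denominators by scaling the $j$-th column by $(z_j+w)_{n-1}$, which leaves the polynomial determinant $\det[g_i(z_j)]$ with $g_i(z)=(z)_i(z+w+i)_{n-1-i}$; since all $g_i$ have the same degree $n-1$, the plain degree argument fails --- as you correctly flag --- and your fix is to expand each $g_i$ in the rising-factorial basis via the Chu--Vandermonde convolution $(z+i+w)_{n-1-i}=\sum_{l}\binom{n-1-i}{l}(z+i)_l(w)_{n-1-i-l}$ together with $(z)_i(z+i)_l=(z)_{i+l}$, obtaining an upper-triangular coefficient matrix with diagonal entries $(w)_{n-1-i}$, whence $\det[g_i(z_j)]=\{\prod_{j=0}^{n-1}(w)_j\}\Delta_n(\mathbf{z})$. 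Each step checks out, including the denominator-clearing identity $(z+w)_{n-1}/(z+w)_i=(z+w+i)_{n-1-i}$ and the final merge $\Gamma(w)(w)_j=\Gamma(w+j)$. As for what each approach buys: yours is non-recursive and purely linear-algebraic --- a single triangular change of basis --- at the price of invoking the Chu--Vandermonde identity; the paper's recursion needs nothing beyond row operations and factorization, but costs delicate bookkeeping of the nested product terms. Your version could serve as a self-contained replacement for the appeal to Normand's lemma.
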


Formula \eqref{eq:detgamma} can be derived by showing that
\begin{equation*}
	\text{det} [(z_j)_i]_{n \times n}=\Delta_n(\mathbf{z})
\end{equation*}
which is reduced to Vandermonde determinant by matrix row operations since $(z_j)_i$ is a degree $i$ polynomial in $z_j$.

Formula \eqref{eq:detbeta} is not exactly stated in \cite{kn:Normand}, but is straightforward from Corollary 6 in \cite{kn:Normand}. It is derived using the following result (Lemma 3 in \cite{kn:Normand})
\begin{equation} \label{eq:detbeta0}
	\text{det} \Big [\frac{(z_j)_i}{(z_j+w)_i} \Big ]_{n \times n}=\Big \{\prod_{j=0}^{n-1} \frac{(w)_j}{(z_j+w)_{n-1}}\Big \} \Delta_n(\mathbf{z})
\end{equation}
which can be derived recursively in a similar spirit as Vandermonde determinant. Indeed if we subtract $(i+1)$-th row by $i$-th row multiplied by a factor to zero out the first entry in the row, the $(i+1, j+1)$-th entry becomes:
\begin{align*}
	&\frac{(z_j)_i}{(z_j+w)_i}-\frac{z_0+i-1}{z_0+w+i-1} \frac{(z_j)_{i-1}}{(z_j+w)_{i-1}} \\
	= &\frac{w(z_j-z_0)}{(z_0+w+i-1)(z_j+w+i-1)}\frac{(z_j)_{i-1}}{(z_j+w)_{i-1}} \\
	= &\frac{w(z_j-z_0)}{(z_0+w+i-1)(z_j+w)}\frac{(z_j)_{i-1}}{(z_j+w+1)_{i-1}}
\end{align*}
Therefore we obtain a recursive formula for the determinant
\begin{align*}
	&\text{det} \Big [\frac{(z_j)_i}{(z_j+w)_i} \Big ]_{n \times n} \\
	=&\Big \{\prod_{i=1}^{n-1} \frac{w(z_i-z_0)}{(z_i+w)(z_0+w+i-1)}\Big \}\text{det} \Big [\frac{(z_{j+1})_i}{(z_{j+1}+w+1)_i} \Big ]_{(n-1) \times (n-1)}	\\
	=&\frac{w^{n-1}}{(z_0+w)_{n-1}}\Big \{\prod_{j=1}^{n-1} \frac{(z_j-z_0)}{(z_j+w)}\Big \}\text{det} \Big [\frac{(z_{j+1})_i}{(z_{j+1}+w+1)_i} \Big ]_{(n-1) \times (n-1)}
\end{align*}
Note that $w$ is increased by 1 in the next iteration. Recursively we have
\begin{align*}
	\text{det} \Big [\frac{(z_j)_i}{(z_j+w)_i} \Big ]_{n \times n}
	=&\prod_{i=1}^{n-1} \frac{(w+i-1)^{n-i}}{(z_{i-1}+w+i-1)_{n-i}} \Big \{ \prod_{j=i}^{n-1} \frac{(z_j-z_{i-1})}{(z_j+w+i-1)}\Big \}	
\end{align*}
We split the right hand side into four terms. The first term is
\begin{align*}
	\prod_{i=1}^{n-1}(w+i-1)^{n-i}&=	\prod_{i=1}^{n-1}\prod_{j=i}^{n-1}(w+i-1)=\prod_{j=1}^{n-1}\prod_{i=1}^{j}(w+i-1)=\prod_{j=1}^{n-1}(w)_{j}
\end{align*}
The second terms is 
\begin{align*}
	\prod_{i=1}^{n-1} \frac{1}{(z_{i-1}+w+i-1)_{n-i}} &=\prod_{i=1}^{n-1} \frac{(z_{i-1}+w)_{i-1}}{(z_{i-1}+w)_{n-1}} =\prod_{i=0}^{n-2} \frac{(z_{i}+w)_{i}}{(z_{i}+w)_{n-1}} 	
\end{align*}
The third term is
\begin{align*}
	\prod_{i=1}^{n-1} \prod_{j=i}^{n-1} \frac{1}{(z_j+w+i-1)} &= \prod_{i=1}^{n-1} \prod_{j=1}^{i} \frac{1}{(z_i+w+j-1)}= \prod_{i=1}^{n-1} \frac{1}{(z_i+w)_i}
\end{align*}
The fourth term is
\begin{align*}
	\prod_{i=1}^{n-1} \prod_{j=i}^{n-1} (z_j-z_{i-1}) = \prod_{0\le i<j \le n-1}(z_j-z_i)=\Delta_n(\mathbf{z})
\end{align*}
Combining all four terms we have
\begin{align*}
	\text{det} \Big [\frac{(z_j)_i}{(z_j+w)_i} \Big ]_{n \times n}
	=& \Big \{\prod_{j=1}^{n-1}(w)_{j} \} \Big \{\prod_{i=0}^{n-2} \frac{(z_{i}+w)_{i}}{(z_{i}+w)_{n-1}} \Big \} \Big \{ \prod_{i=1}^{n-1} \frac{1}{(z_i+w)_i}\Big \} \Delta_n(\mathbf{z})\\ 
	=& \Big \{\prod_{j=0}^{n-1} \frac{(w)_j}{(z_j+w)_{n-1}}\Big \} \Delta_n(\mathbf{z})		
\end{align*}

\section{Laguerre Polynomials} \label{sec:laguerre}

Here we derive Laguerre polynomials using the determinant formulation \eqref{eq:gscofactor} with the monomials $\{1, x, x^2, \cdots, \}$ as basis. As computed in formula \eqref{eq:gshermite}, the Gram matrix has entries in Gamma function. 

We first compute cofactor $d_{n,n}$ by setting $\mathbf{z}=\mathbf{e}_n^n\oplus\alpha$ in formula \eqref{eq:detgamma}
\begin{align*}
	d_{n,n} & = \text{det} [\Gamma(z_j+i)]_{i,j=0,1,\cdots,n-1}=\Big\{ \prod_{j=0}^{n-1} \Gamma(z_j) \Big\} \Delta_n(\mathbf{z})\\	
	& = \Big\{ \prod_{j=0}^{n-1} \Gamma(\alpha+1+j) \Big\} \Delta_n(\mathbf{e}_n^n)  		 
\end{align*}

We then compute cofactor $d_{n,m}$ by set $\mathbf{z}=\mathbf{e}_n^m\oplus\alpha$ in formula \eqref{eq:detgamma} 
\begin{align*}
	d_{n,m} &= \text{det} [\Gamma(z_j+i)]_{i,j=0,1,\cdots,n-1}=\Big\{ \prod_{j=0}^{n-1} \Gamma(z_j) \Big\} \Delta_n(\mathbf{z})\\	
	& = \Big\{ \prod_{j=0, j \neq m}^{n}\Gamma(\alpha+1+j) \Big\} \Delta_n(\mathbf{e}_n^m)		 
\end{align*}

Therefore the $n$-th orthogonal function using formula \eqref{eq:gscofactor} is 
\begin{align*}
	l_n^{\alpha}(x) & =  \sum_{m=0}^{n}(-1)^{n+m}\frac{\Big\{ \prod_{j=0, j \neq m}^{n}\Gamma(\alpha+1+j) \Big\} \Delta_n(\mathbf{e}_n^m)}{\Big\{ \prod_{j=0}^{n-1} \Gamma(\alpha+1+j) \Big\} \Delta_n(\mathbf{e}_n^n)} x^m \\
	& = \sum_{m=0}^{n}(-1)^{n+m}\frac{\Gamma(\alpha+n+1) n!}{\Gamma(\alpha+m+1) m!(n-m)!} x^m \\
	& = \sum_{m=0}^{n}(-1)^{n+m}\frac{n!(\alpha+1)_n}{m!(n-m)!(\alpha+1)_m} x^m  	 		 
\end{align*}

Comparing this to the formula \eqref{eq:Laguerre} we conclude
\begin{equation}
	l_n^{\alpha}(x) = (-1)^{n}n!L_n^{\alpha}(x)			 
\end{equation}

\section{Jacobi Polynomials} \label{sec:jacobi}

Here we derive Jacobi polynomials using the determinant formulation \eqref{eq:gscofactor} with the set $\{1, \frac{1-x}{2}, (\frac{1-x}{2})^2, \cdots, \}$ as basis. This choice is an adaption to the form of Jacobi weight function.

We first compute the inner product between $f_i=(\frac{1-x}{2})^i$ and $f_j=(\frac{1-x}{2})^j$ under Jacobi weight function:
\begin{align*}
	\inner{f_i}{f_j} & = \int_{-1}^{1} (\frac{1-x}{2})^i (\frac{1-x}{2})^i  (1-x)^{\alpha} (1+x)^{\beta}dx \\
	& =2^{\alpha+\beta+1} \int_{0}^{1} u^{i+j+\alpha} (1-u)^{\beta} du	\\
	& =2^{\alpha+\beta+1} B(i+j+\alpha+1, \beta+1)
\end{align*}

For cofactor $d_{n,n}$, we set $\mathbf{z}=\mathbf{e}_n^n\oplus\alpha$ and $w=\beta+1$ in formula \eqref{eq:detbeta}
\begin{align*}
	d_{n,n} & = \text{det} [2^{\alpha+\beta+1}B(z_j+i,w)]_{n \times n}=\Big\{\prod_{j=0}^{n-1} \frac{2^{\alpha+\beta+1}\Gamma(z_j) \Gamma(w+j)}{\Gamma(z_j+w+n-1)}\Big\} \Delta_n(\mathbf{z}) \\	
	& =\Big \{\prod_{j=0}^{n-1} \frac{2^{\alpha+\beta+1}\Gamma(\alpha+1+j)\Gamma(\beta+1+j)}{\Gamma(\alpha+\beta+n+1+j)}\Big \} \Delta_n (\mathbf{e}_n^n)			 
\end{align*}

For cofactor $d_{n,m}$, we set $\mathbf{z}=\mathbf{e}_n^m\oplus\alpha$ and $w=\beta+1$ in formula \eqref{eq:detbeta}
\begin{align*}
	d_{n,m}  & = \text{det} [2^{\alpha+\beta+1}B(z_j+i,w)]_{n \times n}=\Big\{\prod_{j=0}^{n-1} \frac{2^{\alpha+\beta+1}\Gamma(z_j) \Gamma(w+j)}{\Gamma(z_j+w+n-1)}\Big\} \Delta_n(\mathbf{z}) \\
	& = \Big \{\prod_{j=0, j \neq m}^{n} \frac{2^{\alpha+\beta+1}\Gamma(\alpha+1+j)}{\Gamma(\alpha+\beta+n+1+j)}\prod_{j=0}^{n-1} \Gamma(\beta+1+j)\Big \} \Delta_n(\mathbf{e}_n^m) 		 		 
\end{align*}

Therefore the $n$-th orthogonal function using formula \eqref{eq:gscofactor} is 
\begin{align*}
	j_n^{\alpha,\beta}(x) & = \sum_{m=0}^{n}(-1)^{n+m}\frac{\Big \{\prod_{j=0, j \neq m}^{n} \frac{\Gamma(\alpha+1+j)}{\Gamma(\alpha+\beta+n+1+j)}\Big \} \Delta_n(\mathbf{e}_n^m)}{\Big \{\prod_{j=0}^{n-1} \frac{\Gamma(\beta+1+j)}{\Gamma(\alpha+\beta+n+1+j)}\Big \} \Delta_n(\mathbf{e}_n^n)} (\frac{1-x}{2})^m \\
	& = \sum_{m=0}^{n}(-1)^{n+m}\frac{\Gamma(\alpha+n+1)\Gamma(\alpha+\beta+n+m+1)n!} {\Gamma(\alpha+\beta+2n+1)\Gamma(\beta+m+1)m!(n-m)!} (\frac{1-x}{2})^m \\	  
	& = \frac{(-1)^{n}n!(\alpha+1)_{n}}{(\alpha+\beta+n+1)_{n}}\sum_{m=0}^{n}\frac{(-1)^{m}(\alpha+\beta+n+1)_m} {m!(n-m)!(\alpha+1)_{m} } (\frac{1-x}{2})^m 				 
\end{align*}

Comparing this to the formula \eqref{eq:Jacobi} we conclude
\begin{equation}
	j_n^{\alpha,\beta}(x)=\frac{(-1)^{n}n!}{(\alpha+\beta+n+1)_{n}} J_{n}^{\alpha,\beta}(x)			 
\end{equation}

\section{Hermite Polynomials} \label{sec:hermite}

Here we derive Hermite polynomials using the determinant formulation \eqref{eq:gscofactor} with the monomials $\{1, x, x^2, \cdots, \}$ as basis. 

We first compute the inner product between two monomials $x^i$ and $x^j$ under Hermite weight function:
\begin{align*}
	\inner{x^i}{x^j} & = \int_{-\infty}^{+\infty} x^i x^j e^{-x^2} dx = \int_{-\infty}^{+\infty} x^{i+j} e^{-x^2} dx	\\
	& = \begin{cases}
		0        & \quad \text{if } i+j \text{ is odd} \\
		\int_{0}^{+\infty} x^{\frac{i+j-1}{2}} e^{-x} dx & \quad \text{if } i+j \text{ is even} \\		
	\end{cases} \\
	& = \begin{cases}
		0        & \quad \text{if } i+j \text{ is odd} \\
		\Gamma(\frac{i+j}{2}+\frac{1}{2}) & \quad \text{if } i+j \text{ is even} \\		
	\end{cases}
\end{align*}

For the even $2n$ case, we rearrange the monomial basis up to $x^{2n}$ by first odd degree and then even degree, i.e. $(x, x^3, \cdots, x^{2n-1}, 1, x^2, \cdots, x^{2n})$. Since the inner product between even and odd degree monomials are 0, the $2n \times (2n+1)$ Gram matrix has a form with non zero entries at the upper left $ n \times n $ block and lower right $n \times (n+1)$ block, it is therefore only the even degree mononial has non-zero cofactor. In addition the $n \times (n+1)$ block has the form $[\Gamma(i+j+\frac{1}{2})]_{i=0,\cdots,n-1,j=0,\cdots,n}$ which is the one in the Laguerre polynomials with $\alpha=-1/2$. As the upper left matrix appears in all cofactors, the cofactor ratio is the same as that in Laguerre case which is given as
\begin{align*}
	\frac{d_{2n,n+m}}{d_{2n,2n}} &=\frac{n!(\frac{1}{2})_n}{m!(n-m)!(\frac{1}{2})_m}=\frac{n!(2n-1)!!}{2^{n-m}m!(n-m)!(2m-1)!!} \\
	&=	\frac{(2n)!}{2^{2(n-m)}(n-m)!(2m)!}
\end{align*}

Therefore the $2n$-th orthogonal function using formula \eqref{eq:gscofactor} is
\begin{align*}
	h_{2n}(x)& = \sum_{m=0}^{n}(-1)^{2n+n+m}\frac{(2n)!}{2^{2(n-m)}(n-m)!(2m)!}x^{2m} \\
	& = \sum_{m=0}^{n}(-1)^{n+m}\frac{(2n)!}{2^{2n}(n-m)!(2m)!}(2x)^{2m} \\
	& = \sum_{m=0}^{n}(-1)^{m}\frac{(2n)!}{2^{2n}(m)!(2n-2m)!}(2x)^{2n-2m}		 
\end{align*}

For the odd $2n+1$ case, we rearrange the monomial basis up to $x^{2n+1}$ by first even degree and then odd degree, i.e. $(1, x^2, \cdots, x^{2n}, x, x^3, \cdots, x^{2n+1})$. The $(2n+1) \times (2n+2)$ Gram matrix has a form with non zero entries at the upper left $ (n+1) \times (n+1) $ block and lower right $n \times (n+1)$ block, it is therefore only the odd degree mononial has non-zero cofactor. In addition the $n \times (n+1)$ block has the form $[\Gamma(i+j+\frac{3}{2})]_{i=0,\cdots,n-1,j=0,\cdots,n}$ which is the one in the Laguerre polynomials with $\alpha=1/2$. Therefore the cofactor ratio is the same as that in Laguerre case which is given as
\begin{align*}
	\frac{d_{2n+1,n+1+m}}{d_{2n+1,2n+1}} &=\frac{n!(\frac{3}{2})_n}{m!(n-m)!(\frac{3}{2})_m}=\frac{n!(2n+1)!!}{2^{n-m}m!(n-m)!(2m+1)!!} \\
	&=	\frac{(2n+1)!}{2^{2(n-m)}(n-m)!(2m+1)!}
\end{align*}

Therefore the $(2n+1)$-th orthogonal function using formula \eqref{eq:gscofactor} is
\begin{align*}
	h_{2n+1}(x)& = \sum_{m=0}^{n}(-1)^{2n+1+n+1+m}\frac{(2n+1)!}{2^{2(n-m)}(n-m)!(2m+1)!}x^{2m+1} \\
	& = \sum_{m=0}^{n}(-1)^{n+m}\frac{(2n+1)!}{2^{2n+1}(n-m)!(2m+1)!}(2x)^{2m+1} \\
	& = \sum_{m=0}^{n}(-1)^{m}\frac{(2n+1)!}{2^{2n+1}(m)!(2n+1-2m)!}(2x)^{2n+1-2m}		 
\end{align*}

Combining the even and odd cases together and comparing with the formula \eqref{eq:Hermite}, we conclude
\begin{equation}
	h_{n}(x)=\sum_{m=0}^{\lfloor \frac{n}{2} \rfloor}\frac{(-1)^{m}n!}{2^n(m)!(n-2m)!}(2x)^{n-2m}=\frac{n!}{2^n}H_n(x)
\end{equation}


\addresseshere

\newpage

\section{Appendix: Geometric Interpretation of the Determinant Formula for Gram-Schidmit Process}\label{apendix:geometric-intepretation}

Here we give a geometric interpretation of the determinant formula \eqref{eq:gsdeterminant} using exterior algebra. Indeed if we set $V_{k}:=\text{Span}\{v_0, v_1,\cdots, v_{k}\}$, geometrically Gram-Schmidt orthogonalization is to find the (one-dimensional) orthogonal compliment of $V_{k-1}$ in $V_k$. The recursive formula \ref{eq:gsrecursive} is simply obtained by orthogonal projection using orthogonal basis. The determinant formula however expresses the orthogonal compliment using general non-orthogonal basis which is to be seen related to the Hodge star operator.   

Let's first recall some basic properties of the Hodge star operator from Warner's book (\cite{kn:W}). Let $(W, \langle \rangle)$ be an $m$-dimensional oriented vector space with an inner product that naturally extends to its exterior algebra $\wedge W$ by setting 
\begin{equation}\label{eq:exterior-algebra-inner-product}
	\inner{w_1 \wedge w_2 \wedge \cdots \wedge w_p}{w_1^{\prime} \wedge w_2^{\prime} \wedge \cdots \wedge w_p^{\prime}} =\text{det}(\inner{w_i}{w_j^{\prime}}_{1\le i, j \le p})	
\end{equation}
on $p$-form. The Hodge star operator $\ast: \wedge^{p}W \to \wedge^{m-p} W$ is determined by
\[ \inner{\ast \alpha}{\beta} \, \Omega = \alpha \wedge \beta, \, \forall \, \alpha\in \wedge^{p}W, \,\beta \in \wedge^{m-p}W \]
where $\Omega \in \wedge^{m}W$ is the volume form associated with the inner product and orientation. The Hodge star operator has the following properties:

\begin{enumerate}
	\item $\ast 1 = \Omega$
	\item $\ast \ast \alpha = (-1)^{p(m-p)}, \, \forall \, \alpha\in \wedge^{p}W $	
	\item $ \inner{\ast \alpha}{\ast \beta} = \inner{\alpha}{\beta}, \, \forall \, \alpha, \beta \in \wedge^{p}W $
	\item $ \inner{\alpha}{\beta} = \ast(\beta \wedge \ast \alpha) = \ast(\alpha \wedge \ast \beta), \, \forall \, \alpha, \beta \in \wedge^{p}W$
\end{enumerate}

Back to the set up in the Gram-Schmidt process, for $1\le k \le n$, we define $u_k^{\prime} \in V_k$ using the Hodge star operator
\begin{equation}\label{eq:gram-schmidt-hodge-star}
	u_k^{\prime}:=\ast(v_0 \wedge v_1 \wedge \cdots \wedge v_{k-1})	
\end{equation}

\begin{lemma}
	\[ \inner{u_k^{\prime}}{v_i} = 0, 0\le i <k \le n\]
\end{lemma}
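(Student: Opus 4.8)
The plan is to apply the defining property of the Hodge star operator directly, taking the ambient oriented inner product space to be $W = V_k$ itself. Since $\{v_0, v_1, \cdots, v_k\}$ is linearly independent, $V_k$ has dimension $m = k+1$, so the $k$-form $v_0 \wedge v_1 \wedge \cdots \wedge v_{k-1} \in \wedge^k V_k$ has Hodge dual
\[ u_k^{\prime} = \ast(v_0 \wedge v_1 \wedge \cdots \wedge v_{k-1}) \in \wedge^{m-k} V_k = \wedge^1 V_k = V_k, \]
which is a genuine vector in $V_k$; this is what makes the pairing $\inner{u_k^{\prime}}{v_i}$ meaningful in the first place.

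First I would set $\alpha = v_0 \wedge v_1 \wedge \cdots \wedge v_{k-1}$ and $\beta = v_i$ for a fixed $i$ with $0 \le i < k$, noting that $\alpha \in \wedge^{k} V_k$ and $\beta \in \wedge^{1} V_k = \wedge^{m-k} V_k$, so the pair $(\alpha, \beta)$ is exactly of the type appearing in the defining relation $\inner{\ast \alpha}{\beta}\,\Omega = \alpha \wedge \beta$. Substituting yields
\[ \inner{u_k^{\prime}}{v_i}\,\Omega = v_0 \wedge v_1 \wedge \cdots \wedge v_{k-1} \wedge v_i. \]

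The key step is to observe that the right-hand side vanishes: because $i < k$, the vector $v_i$ already occurs among the factors $v_0, v_1, \cdots, v_{k-1}$, so the wedge product contains a repeated factor and is therefore $0$ by antisymmetry. Hence $\inner{u_k^{\prime}}{v_i}\,\Omega = 0$, and since $\Omega$ is a nonzero volume form (the inner product inherited from $V$ is positive definite), we conclude $\inner{u_k^{\prime}}{v_i} = 0$ for every $0 \le i < k \le n$.

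There is essentially no computational obstacle here; the only point demanding care is the bookkeeping of degrees and the choice of ambient space. One must fix $W = V_k$, with the inner product restricted from $V$ and some chosen orientation, precisely so that $\ast$ carries a $k$-form to a $1$-form. Had one instead worked inside the full space $V$, the degree count $m - p$ would differ and $\ast(v_0 \wedge v_1 \wedge \cdots \wedge v_{k-1})$ would no longer be a vector, so the statement would not even typecheck. Once this setup is pinned down, the lemma is a one-line consequence of the definition of $\ast$ together with the vanishing of a wedge product with a repeated factor.
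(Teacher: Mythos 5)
Your proof is correct and follows essentially the same route as the paper: both apply the defining relation of the Hodge star to $\alpha = v_0 \wedge v_1 \wedge \cdots \wedge v_{k-1}$ and $\beta = v_i$, and conclude from the repeated factor $v_i$ in the wedge product (the paper phrases it as $\inner{u_k^{\prime}}{v_i} = \ast(v_0 \wedge \cdots \wedge v_{k-1} \wedge v_i)$, which is just the defining relation with $\ast$ applied to both sides). Your explicit remark that one must take the ambient space to be $W = V_k$ of dimension $k+1$ so that $\ast$ lands in $\wedge^1 V_k$ is a point the paper leaves implicit, and it is a worthwhile clarification.
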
   

\begin{proof}
	Indeed by the properties of the Hodge star operator we have 
	\[ \inner{u_k^{\prime}}{v_i}= \inner{\ast(v_0 \wedge v_1 \wedge \cdots \wedge v_{k-1})}{v_i} = \ast(v_0 \wedge v_1 \wedge \cdots \wedge v_{k-1} \wedge v_i) \]
	which is 0 for $0\le i < k$ since $v_i$ appears twice in the wedge product.
\end{proof} 

From the lemma we have $u_k^{\prime}$ in the orthogonal compliment of $V_{k-1}$ in $V_k$, thus is proportional to $u_k$. To find the scaling factor, we notice that
\begin{align*}
	\inner{u_k^{\prime}}{u_k^{\prime}} & = \inner{\ast(v_0 \wedge v_1 \wedge \cdots \wedge v_{k-1})}{\ast(v_0 \wedge v_1 \wedge \cdots \wedge v_{k-1})} \\ 
	& =\text{det}(\inner{v_i}{v_j}_{0\le i, j \le k-1})=d_{k,k} \\
	\inner{u_k^{\prime}}{v_k} & = \ast(v_0 \wedge v_1 \wedge \cdots \wedge v_{k})\\
	&=\sqrt{\text{det}(\inner{v_i}{v_j}_{1\le i, j \le k})} =\sqrt{d_{k+1,k+1}}
\end{align*}

Therefore 
\begin{equation}\label{eq:gshodgestar}
u_k=\frac{\sqrt{d_{k+1,k+1}}}{d_{k,k}}u_k^{\prime}=\frac{\sqrt{d_{k+1,k+1}}}{d_{k,k}}\ast(v_0 \wedge v_1 \wedge \cdots \wedge v_{k-1}) 	
\end{equation}

To see how the equation \eqref{eq:gshodgestar} gives the right side of the determinant formula \eqref{eq:gsdeterminant} directly, we consider the inner product of $u_k^{\prime}$ with any vector $v$

\begin{align*}
	\inner{u_k^{\prime}}{v} &= \ast(v_0 \wedge v_1 \wedge \cdots \wedge v_{k-1} \wedge v ) \\
	&=\frac{\inner{\ast(v_0 \wedge v_1 \wedge \cdots \wedge v_{k-1}\wedge v)}{\ast(v_0 \wedge v_1 \wedge \cdots \wedge v_{k-1}\wedge v_k)}}{\ast(v_1 \wedge v_2 \wedge \cdots \wedge v_{k-1} \wedge v_k)} 	\\
	&=\frac{\inner{v_0 \wedge v_1 \wedge \cdots \wedge v_{k-1}\wedge v}{v_0 \wedge v_1 \wedge \cdots \wedge v_{k-1}\wedge v_k}}{\sqrt{d_{k+1,k+1}}}  \\
	& = \frac{1}{\sqrt{d_{k+1,k+1}}} \left | 
	\begin{array}{cccc}
		\inner{v_0}{v_0} & \inner{v_1}{v_0} & \cdots & \inner{v_k}{v_0} \\
		\inner{v_0}{v_1} & \inner{v_1}{v_1} & \cdots & \inner{v_k}{v_1} \\
		\vdots & \vdots & \ddots & \vdots \\
		\inner{v_0}{v_{k-1}} & \inner{v_1}{v_{k-1}} & \cdots & \inner{v_k}{v_{k-1}} \\
		\inner{v_0}{v} & \inner{v_1}{v} & \cdots & \inner{v_k}{v}\\				
	\end{array}
	\right |	
\end{align*}  
 
Therefore 
\begin{equation*}
u_k^{\prime}=\frac{1}{\sqrt{d_{k+1,k+1}}}\left | 
\begin{array}{cccc}
	\inner{v_0}{v_0} & \inner{v_1}{v_0} & \cdots & \inner{v_k}{v_0} \\
	\inner{v_0}{v_1} & \inner{v_1}{v_1} & \cdots & \inner{v_k}{v_1} \\
	\vdots & \vdots & \ddots & \vdots \\
	\inner{v_0}{v_{k-1}} & \inner{v_1}{v_{k-1}} & \cdots & \inner{v_k}{v_{k-1}} \\
	v_0& v_1 & \cdots & v_k \\				
\end{array}
\right |	
\end{equation*}

Now we can see the right side of the Equation \eqref{eq:gshodgestar} is the same as the right side of the Equation \eqref{eq:gsdeterminant}. This completes the geometric interpretation of the determinant formula Equation \eqref{eq:gsdeterminant} in terms of the Hodge star operator in exterior algebra.

\end{document}